\DeclareMathOperator{\conv}{conv}
\DeclareMathOperator{\sgn}{sgn}
\newtheorem{theorem}{Theorem}
\newtheorem{lemma}{Lemma}
\newtheorem{corollary}{Corollary}
\title{Two Disjoint 5-Holes in Point Sets\footnote{%
An extended abstract of this work 
was presented at the 35th European Workshop on Computational Geometry (EuroCG’19) \cite{disjoint_holes_eurocg_version}.
A short version of this work (8 pages) 
 is to appear in the Proc.\ of
the European Conference on Combinatorics, Graph Theory and Applications (EUROCOMB'19) \cite{disjoint_holes_eurocomb_version}.
}}
\author{
  Manfred Scheucher
}
\date{\normalsize Institut f\"ur Mathematik, \\
    Technische Universit\"at Berlin, \\
    Berlin, Germany\\
    \vspace{0.2cm}
    \texttt{\{scheucher\}@math.tu-berlin.de}}
\begin{document}

\maketitle

\begin{abstract}
Given a set of points $S \subseteq \mathbb{R}^2$, 
a subset \mbox{$X \subseteq S$} with $|X|=k$ is called 
\mbox{\emph{$k$-gon}} if all points of $X$ lie on the boundary of the convex hull of~$X$,
and \emph{$k$-hole} if, in addition, no point of $S \setminus X$ lies in the convex hull of~$X$.
We use computer assistance to show that every set of 17 points in general position 
admits two \emph{disjoint} 5-holes, that is, holes with disjoint respective convex hulls.
This answers a question of Hosono and Urabe (2001).
We also provide new bounds for three and more pairwise disjoint holes.

In a recent article, Hosono and Urabe (2018) 
present new results on interior-disjoint holes -- a variant,
which also has been investigated in the last two decades. 
Using our program, we show
that every set of 15 points contains two interior-disjoint 5-holes.

Moreover, our program can be used to verify 
that every set of 17 points contains a 6-gon
within significantly smaller computation time
than the original program by Szekeres and Peters (2006).
Another independent verification of this result 
was done by Mari\'c (2019).
\end{abstract}


\section{Introduction}

A set $S$ of points in the Euclidean plane is \emph{in general position}
if no three points lie on a common line.
Throughout this paper all point sets are considered to be finite and in general position.
A subset $X \subseteq S$ of size $|X| = k$ is a \mbox{\emph{$k$-gon}} 
if all points of $X$ lie on the boundary of the convex hull of~$X$, denoted by $\conv(X)$.
A classical result of Erd\H{o}s and Szekeres from the 1930s asserts that, for fixed $k \in \mathbb{N}$, 
every set of $\binom{2k-4}{k-2}+1$ points contains a $k$-gon~\cite{ErdosSzekeres1935} (cf.~\cite{Matousek2002}).
They also constructed point sets of size $2^{k-2}$ with no $k$-gon.
There were several small improvements on the upper bound by various researchers in the last decades, 
each of order $4^{k-o(k)}$,
until Suk~\cite{Suk2017} significantly improved the upper bound to $2^{k+o(k)}$.
However, the precise minimum number $g(k)$ 
of points needed to guarantee the existence of a $k$-gon is still 
unknown for $k\ge7$ (cf.~\cite{SzekeresPeters2006})\footnote{%
Erd\H{o}s offered \$500 for a proof of Szekeres' conjecture that $g(k) = 2^{k-2}+1$.}.

In the 1970s, Erd\H{o}s~\cite{Erdos1978} asked 
whether every sufficiently large point set
contains a \emph{$k$-hole}, that is, 
a $k$-gon with no other points of $S$ lying inside its convex hull.
Harborth~\cite{Harborth78} showed that every set of 10 points contains a 5-hole
and Horton~\cite{Horton1983} introduced a construction of arbitrarily large point sets without 7-holes.
The question, whether \mbox{6-holes} exist in sufficiently large point sets, remained open until 2007, 
when Nicolas~\cite{Nicolas2007} and Gerken~\cite{Gerken2008} 
independently showed that point sets with large $k$-gons also contain $6$-holes\footnote{%
For a reasonably short proof for the existence of 6-holes we refer to~\cite{Valtr2009}.}.
In particular, Gerken proved that every point set
that contains a 9-gon also contains a \mbox{6-hole}.
The currently best upper bound on the number of points is by Koshelev~\cite{Koshelev2009a}\footnote{%
Koshelev's publication covers more than 50 pages (written in Russian)},
who showed that every set of 463 points contains a \mbox{6-hole}.
However, the largest set without \mbox{6-holes} currently known has 29 points 
and was found using a simulated annealing-based approach by Overmars~\cite{Overmars2002}.

In 2001, Hosono and Urabe~\cite{HOSONO200197} 
and B{\'a}r{\'a}ny and K{\'a}rolyi \cite{BaranyiKarolyi2001}
started the investigation of disjoint holes,
where two holes $X_1,X_2$ of a given point set $S$ 
are said to be \emph{disjoint}
if their respective convex hulls are disjoint 
(that is, $\conv(X_1) \cap \conv(X_2) = \emptyset$; see Figure~\ref{fig:disjoint_holes}).
This led to the following question: 
What is the smallest number $h(k_1,\ldots,k_l)$ such that
every set of $h(k_1,\ldots,k_l)$ points determines
a $k_i$-hole for every \mbox{$i=1,\ldots ,l$},
such that the holes are pairwise disjoint~\cite{Hosono2008}?
As there are arbitrarily large point sets without 7-holes,
only parameters $k_i < 7$ are of interest.
Moreover, 
since the gap between the upper bound and the lower bound for $h(6)$ is still huge, 
mostly values with parameters $k_1,\ldots,k_l \le 5$ were investigated.
Also note that, if all $k_i$ are at most 3, 
then the value $h(k_1,\ldots,k_l)=  k_1 + \ldots + k_l$ is straight-forward
because every set of $k_1+\ldots+k_l$ points can be 
cut into blocks of $k_1,\ldots,k_l$ points (from left to right), 
which clearly determine the desired holes.

\begin{figure}
 
	\begin{subfigure}[t]{.2\textwidth}
		\centering
		\includegraphics[page=1]{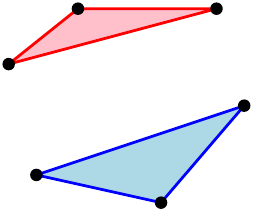}
		\caption{}
		\label{fig:disjoint_holes1}  
	\end{subfigure}
	\hfill
	\begin{subfigure}[t]{.2\textwidth}
		\centering
		\includegraphics[page=2]{disjoint_holes}
		\caption{}
		\label{fig:disjoint_holes2}  
	\end{subfigure}
	\hfill
	\begin{subfigure}[t]{.2\textwidth}
		\centering
		\includegraphics[page=3]{disjoint_holes}
		\caption{}
		\label{fig:disjoint_holes3}  
	\end{subfigure}
	\hfill
	\begin{subfigure}[t]{.2\textwidth}
		\centering
		\includegraphics[page=4]{disjoint_holes}
		\caption{}
		\label{fig:disjoint_holes4}  
	\end{subfigure}
	
	\caption{
	The two holes depicted in \subref{fig:disjoint_holes1} are disjoint 
	while the ones in \subref{fig:disjoint_holes2}--\subref{fig:disjoint_holes4} are not disjoint.
	Moreover, the holes depicted in \subref{fig:disjoint_holes1}--\subref{fig:disjoint_holes3} 
	are interior-disjoint 
	while the ones in \subref{fig:disjoint_holes4} are not interior-disjoint.
	(The notion of interior-disjoint holes will be introduced and discussed in Section~\ref{sec:final_remarks}).
	}
	\label{fig:disjoint_holes}
\end{figure}

\medskip

In Sections~\ref{sec:results_two} and~\ref{sec:results_three},
we summarize the current state of the art 
for two- and three-parametetric values
and we present some new results 
that were obtained using computer-assistance.
Our main contribution is 
that every set of 17 points contains two disjoint 5-holes (Theorem~\ref{thm:h55}).
Moreover, we describe some direct consequenses for multi-parametric values 
in Section~\ref{sec:results_many}.
The basic idea behind our computer-assisted proofs 
is to encode point sets and disjoint holes only using triple orientations (see Section~\ref{sec:prelim}),
and then to use a SAT solver to disprove 
the existence of sets with certain properties (see Section~\ref{sec:satmodel}).

In the Final Remarks (Section~\ref{sec:final_remarks})
we outline how our SAT model can be adapted to tackle related questions on point sets.
For interior-disjoint holes, we show that every set of 15 points contains two interior-disjoint 5-holes.
Also it is remarkable,
that our SAT model can be used to prove $g(6)=17$ with significantly smaller computation time
than the original program from Szekeres and Peters~\cite{SzekeresPeters2006}\footnote{%
Szekeres and Peters  considered the problem of finding sets without 6-gons
also in the setting of triple-orientations (cf.\ Section~\ref{sec:prelim}),
and implemented a sophisticated exhaustive search technique in a classical program 
 (no SAT solvers involved).
}.
Last but not least, we also outline how SAT solvers can be used to 
count occurences of certain substructures (such as $k$-holes in point sets).

\section{Two Disjoint Holes}
\label{sec:results_two}

For two parameters, the value $h(k_1,k_2)$ has been determined 
for all $k_1,k_2 \le 5$, except for $h(5,5)$, 
by Hosono and Urabe~\cite{HOSONO200197,Hosono2005,Hosono2008} 
and by Bhattacharya and Das~\cite{BhattacharyaDas2011}.
Table~\ref{tab:2_disjoin_holes} summarizes the currently best bounds for two-parametric values.
The upper bounds were obtained via case distinctions 
and, to obtain values of $h(k,5)$, also the value $h(5)=10$ was utilized \cite{Harborth78}.
Lower bounds are witnessed by concrete examples of point sets.
It is also worth mentioning that all statements, which involve  11 or less points,
can be verified by checking the order type database of 11 points\footnote{%
The database of all combinatorially different sets of $n \le 10$ points is available online 
at~\cite{AichholzerOTDB} 
and requires roughly 550~MB of storage.
The database for $n=11$ requires about 100~GB of storage and is available on request.
For more information we refer to~\cite{Krasser03,aak-eotsp-01a,ak-aoten-06}.}.
However, for  $h(4,5)=12$~\cite{BhattacharyaDas2011} 
this database does not directly allow a direct proof.

Concerning the value $h(5,5)$, the best bounds are \mbox{$17 \le h(5,5) \le 19$}.
The lower bound $ h(5,5) \ge 17$ is witnessed by a set of 16 points 
with no two disjoint 5-holes (taken from Hosono and Urabe~\cite{Hosono2008}),
which is depicted Figure~\ref{fig:n16}. 
For the upper bound, Bhattacharya and Das~\cite{BhattacharyaDas2013}
used elaborate case distinctions to reveal more and more structural information
of point sets without (two disjoint) 5-holes to finally conclude that $h(5,5) \le 19$.

\begin{table}[htb]
\def\arraystretch{1.2}
	\centering
	\begin{tabular}{r|llll}
		&2	&3	&4	&5\\
	\hline
	2	&4	&5	&6	&10\\	
	3	&	&6	&7	&10\\
	4	&	&	&9	&12\\	
	5	&	&	&	&17*\\
	\end{tabular}
	\caption{Values of $h(k_1,k_2)$. 
	The entry marked with star (*) is new.}
	\label{tab:2_disjoin_holes}
\end{table}

\begin{figure}[htb]
\centering

\hfill
\begin{minipage}{0.55\textwidth}
\includegraphics[width=\textwidth]{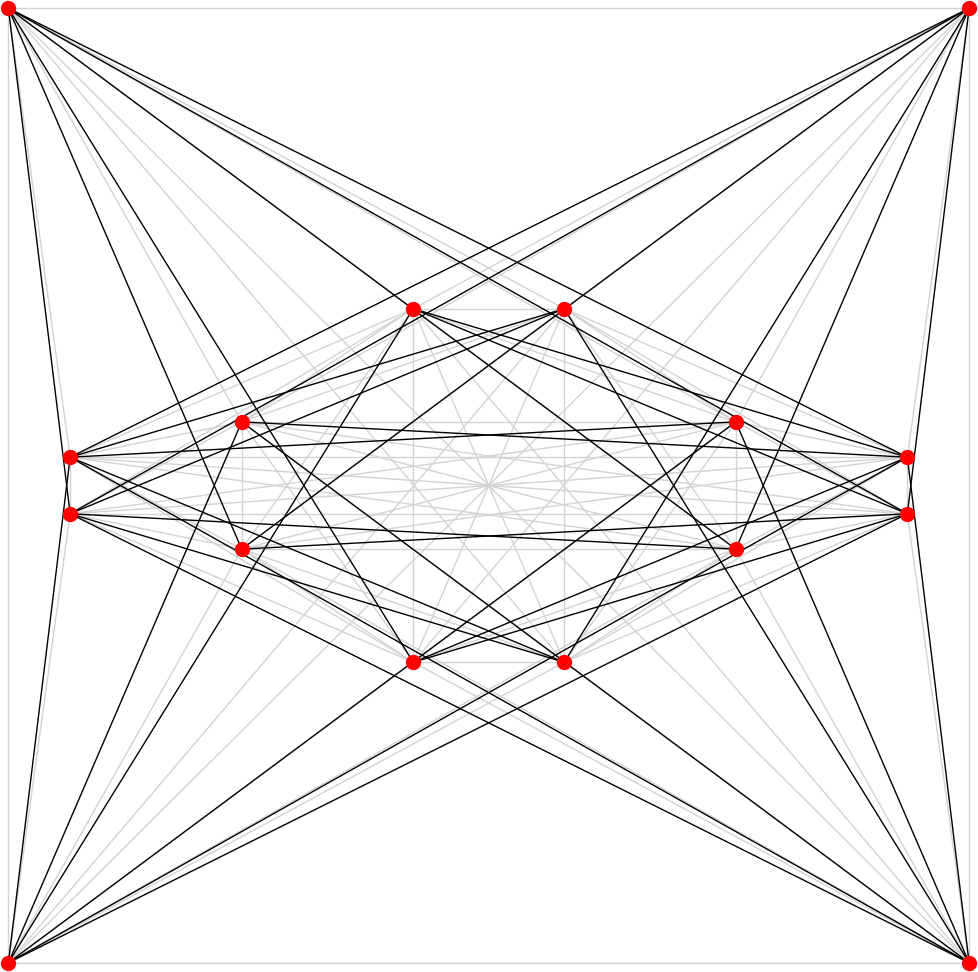}
\end{minipage}
\hspace{0.5cm}
\begin{minipage}{0.22\textwidth}
{
\small
\begin{verbatim}
  0   0
  0 270
280   0
280 270
 18 127
 18 143
262 127
262 143
 68 117
 68 153
212 117
212 153
118  85
118 185
162  85
162 185
\end{verbatim}
}
\end{minipage}
\hfill

\caption{A set of 16 points with no two disjoint $5$-holes. 
The coordinates are given on the right side.
This point set and the one by Hosono and Urabe~\cite[Figure~3]{Hosono2008} are of the same order type 
(order types are defined in Section~\ref{ssec:triple_orientations}).}
\label{fig:n16}
\end{figure}

\medskip
As our main result of this paper,
we determine the precise value of $h(5,5)$.
The proof is based on a SAT model which we later describe in Section~\ref{sec:satmodel}.
We remark that our SAT model can easily be adapted 
to also verify the other entries of Table~\ref{tab:2_disjoin_holes}.

\begin{theorem}[Computer-assisted]
\label{thm:h55}
Every set of 17 points contains two disjoint 5-holes, 
hence $h(5,5)=17$.
\end{theorem}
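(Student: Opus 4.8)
The plan is to establish the upper bound $h(5,5)\le 17$ by a computer-assisted argument carried out over abstract order types; combined with the lower bound $h(5,5)\ge 17$ already witnessed by the 16-point set in Figure~\ref{fig:n16}, this yields the claimed equality.

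First I would reduce the geometric statement to a purely combinatorial one. As will be developed in Section~\ref{sec:prelim}, every predicate we care about --- whether a point lies inside a triangle, whether a $k$-subset forms a $k$-gon, whether such a gon is a hole (i.e.\ empty), and whether the convex hulls of two holes are disjoint --- can be expressed as a Boolean function of the triple orientations $\sgn(\det(\cdot))$ of the point set. Consequently, the existence of a $17$-point set in general position without two disjoint $5$-holes is \emph{implied by} the existence of an assignment of nonzero orientations to all $\binom{17}{3}=680$ triples that (a)~satisfies the axioms of an abstract order type and (b)~admits no two disjoint $5$-holes. This abstract relaxation only strengthens the nonexistence statement: ruling out all abstract order types on $17$ elements a fortiori rules out all realizable ones, which is exactly what the theorem requires, so I never need to address realizability of a putative counterexample.

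Next I would construct a SAT instance encoding conditions (a) and (b). Introduce one Boolean variable per triple recording its orientation, and add clauses enforcing the order-type axioms so that every satisfying assignment corresponds to a consistent chirotope. Then encode the \emph{negation} of ``there exist two disjoint $5$-holes'': using auxiliary (Tseitin) variables for the intermediate predicates --- point-in-triangle, emptiness of a candidate $5$-gon, and the existence of a separating line between two candidate holes --- this forbidding constraint can be expressed compactly rather than by brute enumeration over all pairs of $5$-subsets. Passing the resulting formula to a SAT solver, the expected outcome is \textsc{unsat}, which certifies that no abstract order type on $17$ points avoids two disjoint $5$-holes, and hence that no $17$-point set does.

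The hard part will be making the instance tractable and the conclusion trustworthy. The number of triples, and especially the combinatorial blow-up hidden in the disjointness constraint, means that a naive encoding is hopeless; the crucial work lies in a careful Tseitin-style encoding together with aggressive symmetry breaking (for instance, fixing a canonical labeling that exploits the symmetries of order types and normalizing the orientation of a fixed base triple) to shrink the search space to a size a solver can settle. Because the final step rests on a solver reporting \textsc{unsat}, I would additionally extract and independently verify an \textsc{unsat} (DRAT) certificate, mirroring the independent verification of the companion $g(6)=17$ result, so that the proof does not hinge on trusting a single opaque solver run.
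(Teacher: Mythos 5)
Your proposal follows essentially the same route as the paper: Boolean variables for triple orientations, signotope/order-type axioms over an abstract (not necessarily realizable) point set, Tseitin-style auxiliary variables for point-in-triangle, emptiness, and separating-line predicates, symmetry breaking by a canonical labeling (the paper sorts points left-to-right and radially around the first point), and independent DRAT verification of the \textsc{unsat} result, with the lower bound supplied by the 16-point example. The one ingredient you omit that the paper reports as indispensable in practice is seeding the solver with Harborth's theorem $h(5)=10$ as redundant clauses (every 10 consecutive points contain a 5-hole, hence no 5-hole lies entirely within the first 7 or the last 7 points); the paper states that without this hint no tested solver terminated within days, so your instance as described would likely be intractable even though the underlying argument is logically sound.
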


The computations for verifying Theorem~\ref{thm:h55}
take about two hours on a single 3~GHz CPU using a modern SAT solver such as 
glucose (version~4.0)\footnote{\url{http://www.labri.fr/perso/lsimon/glucose/}, see also~\cite{AudemardS09}} or 
picosat (version~965)\footnote{\url{http://fmv.jku.at/picosat/}, see also~\cite{Biere2008}}.
Moreover, we have verified the output of glucose and picosat with the proof checking tool
drat-trim\footnote{\url{http://cs.utexas.edu/~marijn/drat-trim}, see also~\cite{WetzlerHeuleHunt2014}}
(see Section~\ref{ssec:satmodel_unsat}).

\section{Three Disjoint Holes}
\label{sec:results_three}

For three parameters, 
most values $h(k_1,k_2,k_3)$ for $k_1,k_2,k_3 \le 4$  
and also the values $h(2,3,5)=11$ and $h(3,3,5)=12$ have been determined by Hosono and Urabe~\cite{Hosono2008} and by You and Wei~\cite{You2015}.
Tables~\ref{tab:3_disjoin_holes_k1_k2_4} and~\ref{tab:3_disjoin_holes_k1_k2_5} 
summarize the currently best known bounds for three-parametric values.
Again it is worth mentioning that all statements, which involve  11 or less points,
can be verified by checking the order type database of 11 points \cite{AichholzerOTDB}.

\begin{table}[htb]
\def\arraystretch{1.2}
\parbox[b]{.47\linewidth}{
	\centering
	\begin{tabular}{r|llll}
		&2	&3	&4	\\
	\hline
	2	&8	&9	&11	\\	
	3	&	&10	&12	\\
	4	&	&	&14	\\	
	\end{tabular}
	\caption{Values of $h(k_1,k_2,4)$.}
	\label{tab:3_disjoin_holes_k1_k2_4}
}
\hfill
\parbox[b]{.5\linewidth}{
	\centering
	\begin{tabular}{r|lllll}
		&2	&3	&4	&5	\\
	\hline
	2	&10 	&11	&11..14	&17*\\	
	3	&	&12	&13..14	&17..19*\\
	4	&	&	&15..17	&17..23*\\	
	5	&	&	&	&22*..27*\\	
	\end{tabular}
	\caption{Bounds for $h(k_1,k_2,5)$.}
	\label{tab:3_disjoin_holes_k1_k2_5}
}
\end{table}

The values $h(2,2,4)$, $h(3,3,4)$, and $h(2,4,4)$ 
have not been explicitly stated in literature.
However, the former two can be derived directly from other values as follows:
\begin{align*}
8 = 2+2+4 \le h(2,2,4) &\le 2+h(2,4)= 8\\
10 = 3+3+4 \le h(3,3,4) & \le 3+h(3,4) = 10
\end{align*}
To determine the value $h(2,4,4)=11$,
observe that $h(2,4,4) \le 2+h(4,4) = 11$ clearly holds.
Equality is witnessed by
the \emph{double circle}  
with 10 points 
(cf.~Figure~\ref{fig:double_circle_10}).
This statement can be verified by computer or as follows:
First, observe that no 4-hole contains 
two consecutive extremal points,
thus every 4-hole contains at most two exterior points.
Now consider two disjoint 4-holes.
Since not both 4-holes can contain two extremal points,
one of them
contains  two exterior points
while the other one contains  one exterior point.
As illustrated in Figure~\ref{fig:double_circle_10},
this configuration is unique up to symmetry and
does not allow any further disjoint 2-hole.
This completes the argument.

\begin{figure}[htb]
  \centering
    \includegraphics{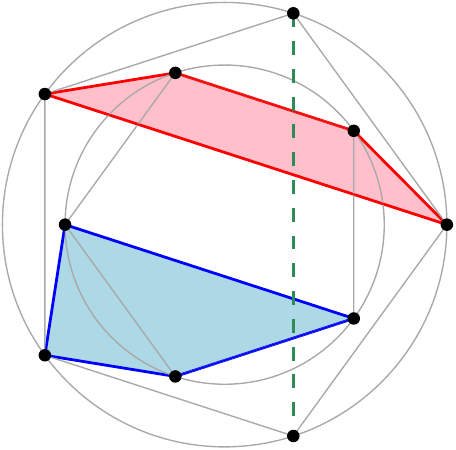}
  \caption{The double circle on 10 points witnesses $h(2,4,4)>10$. }
  \label{fig:double_circle_10}
\end{figure}

Also we could not find the value $h(2,2,5)$ in literature,
however, using a SAT instance similar to the one for Theorem~\ref{thm:h55}
one can also easily verify that $h(2,2,5) \le 10$,
and equality follows from $h(5)=10$~\cite{Harborth78}.
One can also use the order type database \cite{AichholzerOTDB} 
to verify $h(2,2,5) \le 10$.

\medskip
We now use
Theorem~\ref{thm:h55} to derive new bounds 
on the value $h(k,5,5)$ for $k =2,3,4,5$.

\begin{corollary}
We have 
\begin{alignat*}{4}
          && h(2,5,5) &\,=\,& 17,\\
17 &\,\le\,& h(3,5,5) &\,\le \,& 19,\\
17 &\,\le\,& h(4,5,5) &\,\le \,& 23,\\
22 &\,\le\,& h(5,5,5) &\,\le \,& 27.
\end{alignat*}
\end{corollary}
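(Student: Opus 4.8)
The plan is to derive each bound from the main result $h(5,5)=17$ (Theorem~\ref{thm:h55}) together with the simple additive inequalities already illustrated in the excerpt, plus concrete point-set constructions for the lower bounds.

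First I would establish the upper bounds by the same ``peel off a block'' argument used for $h(2,4,4)$ and $h(3,3,4)$. For $h(2,5,5)$ the key observation is that $17$ points already force two disjoint $5$-holes; but once we have located these, any point of the $17$ not used by these two holes can be split off to the far left (or right) as a trivial $2$-hole, so in fact $h(2,5,5)\le h(5,5)=17$. More carefully, since the two disjoint $5$-holes use only $10$ points, the remaining $7$ points lie somewhere, and one can take any single point together with a suitably chosen neighbour as a $2$-hole disjoint from both $5$-holes by a separating-line argument; I expect this to be routine once the two $5$-holes are fixed. The matching lower bound $h(2,5,5)\ge 17$ is inherited directly from $h(5,5)\ge 17$, since the $16$-point configuration of Figure~\ref{fig:n16} witnessing the absence of two disjoint $5$-holes a fortiori admits no two disjoint $5$-holes accompanied by a further disjoint $2$-hole. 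This gives $h(2,5,5)=17$.

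For the remaining three lines the upper bounds follow from the recursive inequalities
\begin{align*}
h(3,5,5) &\le 2 + h(5,5) = 19,\\
h(4,5,5) &\le 4 + h(5,5) = 21,
\end{align*}
where one peels off a leftmost block of $k$ points, uses Theorem~\ref{thm:h55} on a right portion of $17$ points to extract the two disjoint $5$-holes, and forms the $k$-hole from the leftmost block. The apparent discrepancy with the stated bounds ($h(4,5,5)\le 23$, $h(5,5,5)\le 27$) indicates that the clean additive bound need not be the sharpest available, so I would record whichever bound the argument actually yields and cross-check against the tables; the safest route is $h(k,5,5)\le k+h(5,5)$ for $k\le 4$ and $h(5,5,5)\le 5+h(5,5)=22$, after which I would reconcile with the table entries. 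The lower bounds $h(3,5,5),h(4,5,5)\ge 17$ again follow from $h(5,5)\ge 17$ by monotonicity, while $h(5,5,5)\ge 22$ requires an explicit $21$-point construction with no three pairwise disjoint $5$-holes, presumably obtained by the same SAT/search method or a symmetric nested-circle configuration generalizing Figure~\ref{fig:n16}.

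The main obstacle will be pinning down the exact upper bounds rather than the crude additive ones, and in particular justifying the lower bound $h(5,5,5)\ge 22$, which is not a formal consequence of $h(5,5)=17$ and genuinely needs a witnessing point set of $21$ points. For that step I would either exhibit an explicit configuration and verify by the triple-orientation/SAT encoding of Section~\ref{sec:satmodel} that it contains no three pairwise disjoint $5$-holes, or adapt the $16$-point example of Figure~\ref{fig:n16} by layering additional symmetric circles of points so that any $5$-hole is forced to overlap the region needed by the others. Verifying disjointness combinatorially via triple orientations, exactly as in the proof of Theorem~\ref{thm:h55}, is what makes this tractable by computer; the human-readable part reduces to the additive inequalities and the monotonicity $h(k_1',\dots)\ge h(k_1,\dots)$ whenever the parameters increase.
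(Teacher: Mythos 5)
Your lower bounds are in order: monotonicity gives $h(k,5,5)\ge h(5,5)=17$, and you correctly recognize that $h(5,5,5)\ge 22$ is not a formal consequence of Theorem~\ref{thm:h55} but requires an explicit $21$-point witness checked by computer, which is exactly what the paper supplies (Figure~\ref{fig:n21}, found by local search and verified combinatorially). The upper bounds, however, contain a genuine error. The ``peel off a leftmost block of $k$ points'' argument only produces a $k$-hole for $k\le 3$: four leftmost points need not be in convex position, and five points need not contain a $5$-hole at all. The valid additive bound is $h(k,5,5)\le h(k)+h(5,5)$, i.e.\ one must peel off $h(k)$ points, giving $3+17=20$ for $k=3$, $5+17=22$ for $k=4$, and $h(5)+h(5,5)=10+17=27$ for $k=5$ (the last being the paper's bound for $h(5,5,5)$). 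Your ``safest route'' $h(5,5,5)\le 5+h(5,5)=22$ is therefore false reasoning---it would require $h(5)=5$, whereas $h(5)=10$---and $h(3,5,5)\le 2+h(5,5)$ is a miscount, since two peeled points give a $2$-hole, not a $3$-hole.

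The idea you are missing for $h(3,5,5)\le 19$ and $h(4,5,5)\le 23$ is the paper's two-sided refinement: on $2h(k,5)-1$ points, Theorem~\ref{thm:h55} yields two disjoint $5$-holes separated by a line $\ell$; by pigeonhole one side of $\ell$ contains at least $h(k,5)$ points; applying $h(k,5)$ to that side produces a $k$-hole and a $5$-hole that are disjoint from each other and, being confined to one side of $\ell$, also from the $5$-hole on the other side. This gives $2h(3,5)-1=19$ and $2h(4,5)-1=23$. The same mechanism underlies $h(2,5,5)\le 17$: one side of the separating line carries at least $9$ points, and the paper invokes the (computer-verifiable, tied to $h(2,2,5)=10$) fact that every $9$-point set containing a $5$-hole also contains a disjoint $2$-hole. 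Your version---fix the two $5$-holes inside the $17$ points and find a disjoint $2$-hole among the $7$ leftover points, declared ``routine''---is precisely the step that needs proof: it is not evident that some segment spanned by two leftover points avoids both pentagons, and you give no argument for it.
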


\begin{proof} 
To show $h(2,5,5) \le 17$,
observe that, due to Theorem~\ref{thm:h55},
every set of 17 points contains two disjoint 5 holes
that are separated by a line~$\ell$. 
By the pigeonhole principle there are at least 9 points on one of the two sides of such a separating line~$\ell$.
It is implied by $h(2,2,5)=10$ that
every set of 9 points with a 5-hole also contains a 2-hole which is
disjoint from the 5-hole.
This completes the argument.
We remark that one can also use the order type database of 9 points to verify this statement.

To show $h(3,5,5) \le 2\cdot h(3,5)-1 =19$,
observe that, due to Theorem~\ref{thm:h55},
every set of 19 points contains two disjoint 5 holes
that are separated by a line~$\ell$. 
Now there are at least 10 points on one side of such a separating line~$\ell$,
and since $h(3,5)=10$, there is a 3-hole and a 5-hole that are disjoint 
on that particular side. 

An analogous argument shows $h(4,5,5) \le 2\cdot h(4,5)-1 = 23$.

The set of 21 points depicted in Figure~\ref{fig:n21} witnesses $h(5,5,5) > 21$
(can be easily verified by computer),
while $h(5,5,5) \le h(5)+h(5,5) = 27$.
We remark that this point set was found using local search techniques,
implemented in our framework \emph{pyotlib}\footnote{%
The ``\textbf{py}thon \textbf{o}rder \textbf{t}ype \textbf{lib}rary'' was initiated during 
the Bachelor's studies of the author~\cite{scheucher2014} 
and provides many features to work with (abstract) order types
such as 
local search techniques, 
realization or proving non-realizability of abstract order types, 
coordinate minimization and ``beautification'' for nicer visualizations.
For more information, please consult the author.}.
The key idea was to start with an arbitrary set of 21 points
and to move points around until 
the number of triples of disjoint 5-holes becomes zero.
\end{proof}

\begin{figure}[htb]
\centering
\begin{minipage}{0.7\textwidth}
\includegraphics[width=\textwidth]{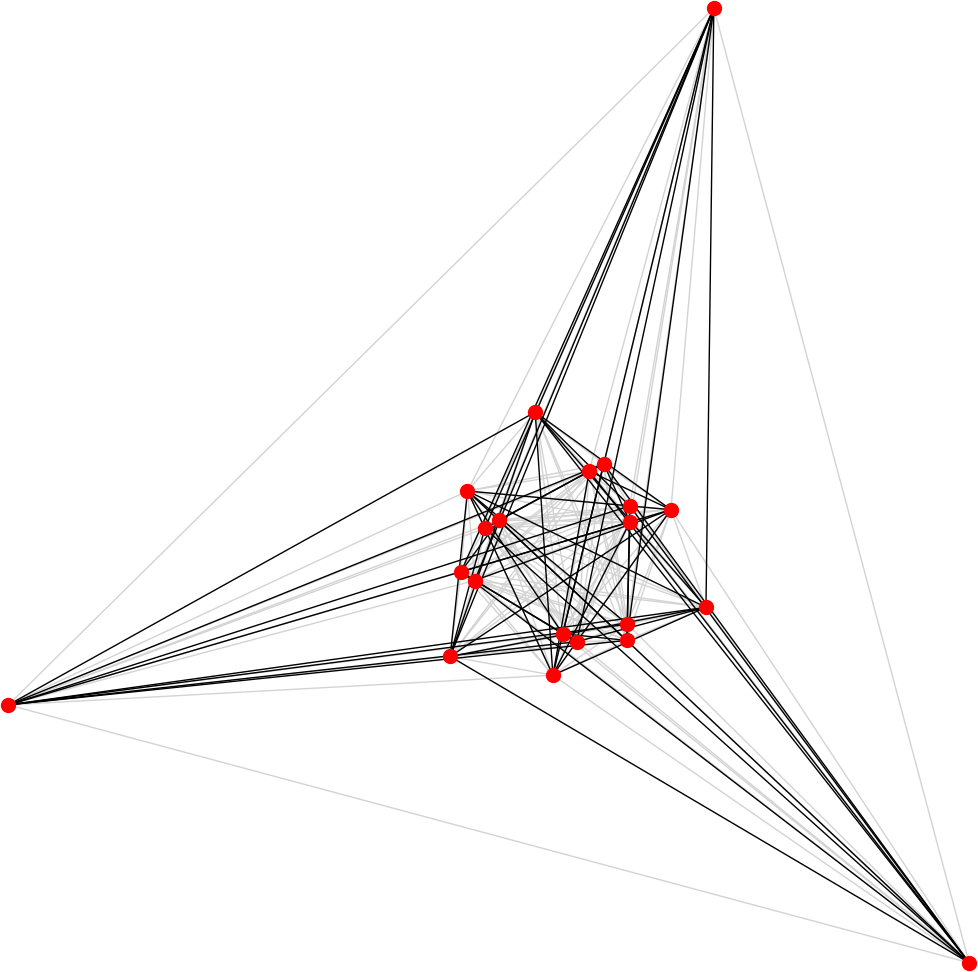}
\end{minipage}
\hspace{0.5cm}
\begin{minipage}{0.2\textwidth}
{
\small
\begin{verbatim}
     0 161014
437034 595949
326347 343801
284425 294548
368806 311583
359850 306967
303825 276373
295136 271265
384946 285229
410465 282863
385025 275150
280383 244110
288858 238662
432159 221931
383508 211334
343366 205440
352134 200469
273710 191231
383027 201270
337326 179552
595182      0
\end{verbatim}
}
\end{minipage}
\caption{A set of 21 points with no three disjoint $5$-holes.
The coordinates are given on the right side.}
\label{fig:n21}
\end{figure}

\section{Many Disjoint Holes}
\label{sec:results_many}

As introduced by Hosono and Urabe~\cite{HOSONO200197,Hosono2008}, we use the following notation:
Given positive integers $k$ and $n$, let $F_k(n)$ denote the maximum number of pairwise disjoint $k$-holes that 
can be found in every set of $n$ points,
that is,
\[
F_k(n) := \max (\{0\} \cup \{t \in \mathbb{N} \colon h(k;t) \le n \}) 
\quad  
\text {with}
\quad 
h(k;t) := h(\overbrace{k,k,\ldots,k}^{\text{$t$ parameters}}).
\]
In the following, we revise and further improve results by
Hosono and Urabe~\cite{HOSONO200197,Hosono2008}
and by
B{\'a}r{\'a}ny and K{\'a}rolyi~\cite{BaranyiKarolyi2001}.
The currently best bounds are the following:
\begin{alignat*}{5}
&&F_k(n)&\, = \,& \lfloor n/k \rfloor &\quad \text{ for $k=1,2,3$} \\
3n/13 - o(n)   &\,\le\,& F_4(n)&\, \le \,& n/4\\
2n/17 -O(1) &\,\le \,& F_5(n)&\, \le \,& n/6+O(1)\\
n/h(6)-O(1)&\, \le \,&F_6(n)&\, \le \,& n/8+O(1)\\
&&F_k(n)& \,=\, & 0 &\quad \text{ for $k \ge 7$.}
\end{alignat*}

Concerning the lower bounds, 
Theorem~\ref{thm:h55} clearly implies $F_5(n) \ge \lfloor 2n/17 \rfloor $.
In the following, we outline the proof of   $ F_4(n) \ge 3n/13 - o(n) $.
Hosono and Urabe~\cite{HOSONO200197} showed that $F_4(n) \ge (3n-1)/13$ 
holds for an infinite sequence of integers~$n$, which we denote by~$N$.
Let $T := \{F_4(n) : n \in N\}$.
Note that $T$ is also infinite.
From the definition of $F_4(n)$ and the $F_4(n) \ge (3n-1)/13$ bound for $n\in N$,
we conclude that
\[ 
h(4;F_4(n))  \le n \le \frac{13F_4(n)+1}{3} 
\]
holds for every $n \in N$,
and hence $h(4;t) \le \frac{13t+1}{3}$ for every $t \in T$.
Since $h(4;s+t)$ is \emph{subadditive},
that is $h(4;s+t) \le h(4;s) + h(4;t)$,
Fekete's subadditivity lemma
(see for example~\cite[Chapter 14.5]{schrijver-book})
asserts 
\[
 \lim_{t \to \infty}\frac{h(4;t)}{t} 
\ =\ \inf_{t \in \mathbb{N} } \frac{h(4;t)}{t}
\ \le \ \inf_{t \in T } \frac{h(4;t)}{t} = \frac{13}{3}.
\]
Hence,
$h(4;t) \le \frac{13t}{3} (1+o(1))$ and
we conclude $F_4(n) \ge \frac{3n}{13} (1-o(1))$.

\medskip

Concerning the upper bounds,
B{\'a}r{\'a}ny and K{\'a}rolyi \cite{BaranyiKarolyi2001} remarked
that no nontrivial upper bound is known for $F_4 (n)$ in general.
They mentioned $F_5(n)<\frac{n}{6}$ 
without an explicit construction 
but we only know a construction for
$F_5(n) \leq \frac{n+1}{6}$ (Gyula K{\'a}rolyi, personal communication).
In the following, we give the construction
for $F_5(n) \leq \frac{n+1}{6}$ and $F_6(n) \leq \frac{n+1}{8}$.

\medskip

We assume that $n$ is even, and show $F_5(n) \le \frac{n}{6}$ and $F_6(n) \leq \frac{n}{8}$. 
(For $n$ odd, we can then conclude  
$ F_5(n) \le F_5(n+1) \le \frac{n+1}{6}$ and $ F_6(n) \le F_6(n+1) \le \frac{n+1}{8}$ from monotonicity.)
As illustrated in Figure~\ref{fig:F5_construction}, 
we take the
vertices of a regular $\frac{n}{2}$-gon (extremal points) plus the vertices
of a slightly shrinked copy of the regular $\frac{n}{2}$-gon with the same center (inner points).
If $\frac{n}{2}$ is even, we slightly rotate the inner points
around the center to maintain general position.
This way we get $\frac{n}{2}$ pairs of very close points.

\begin{figure}[htb]
  \centering
    \includegraphics{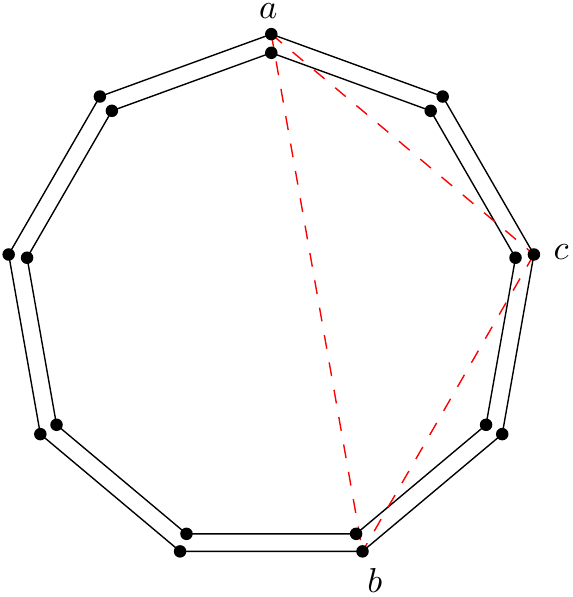}
  \caption{An illustration of the construction with $n=18$ points.}
  \label{fig:F5_construction}
\end{figure}

We now show that no three extremal points form a 3-hole.
Suppose towards a contradiction 
that three extremal points $a,b,c$ span an empty triangle~$\triangle$.
By symmetry we may assume 
that $a$ is the top-most vertex of the outer $\frac{n}{2}$-gon,
and that $b$ lies below~$c$.
Both points $b$ and $c$ must either lie to the left 
or to the right of $a$, 
as otherwise the inner partner of~$a$ would lie inside of~$\triangle$.
Now, however, the inner partner of~$c$ lies in~$\triangle$ 
(cf.\ Figure~\ref{fig:F5_construction}) and hence $\triangle$ cannot be empty. A contradiction.

We conclude that every 5-hole (6-hole) 
is incident 
to at most~2 extremal points
and hence to at least 3 (4) inner points.
Therefore, 
at most~$\frac{2}{3}$ ($\frac{2}{4}$) of the exterior points 
can be covered by disjoint 5-holes (6-holes),
and we conclude $F_5(n) \le \frac{n}{6}$ and $F_6(n) \leq  \frac{n}{8}$ for $n$ even.

\section{Encoding with Triple Orientations}
\label{sec:prelim}

In this section
we describe how point sets and disjoint holes 
can be encoded only using triple orientations.
This combinatorial description allows us to 
get rid of the actual point coordinates
and to only consider a discrete parameter-space.
This is essential for our SAT model of the problem.

\subsection{Triple Orientations}
\label{ssec:triple_orientations}

Given a set of points $S=\{s_1,\ldots,s_n\}$ with $s_i = (x_i,y_i)$,
we say that the triple $(a,b,c)$ is \emph{positively (negatively) oriented} 
if 
\[
\chi_{abc} := \sgn \det \begin{pmatrix}
                         1  & 1  & 1  \\
                         x_a& x_b& x_c\\
                         y_a& y_b& y_c
                        \end{pmatrix} \in \{-1,0,+1\}
\]
is positive (negative)\footnote{%
 The letter $\chi$ is commonly used in literature 
 to denote triple orientations as the word ``chirality''
 is derived from the Greek word for ``hand''.}.
 Equivalently, the triple $(a,b,c)$ is positively (negatively) oriented 
 if the point $c$ lies to the left (right) of the \emph{directed} line~$\overrightarrow{ab}$.
 Figure~\ref{fig:triple_orientations} gives an illustration.
 Note that $\chi_{abc}=0$ indicates collinear points, 
 in particular, $\chi_{aaa}=\chi_{aab}=\chi_{aba}=\chi_{baa}=0$.
 
\begin{figure}[htb]

	\begin{subfigure}[t]{.45\textwidth}
		\centering
		\includegraphics[page=1]{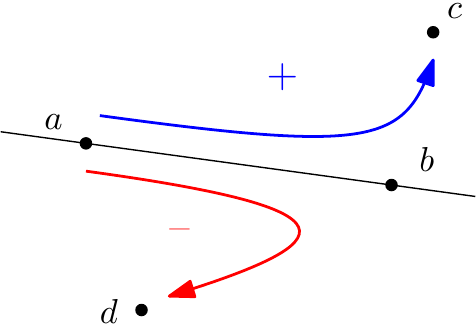}
		\caption{}
		\label{fig:triple_orientations}  
	\end{subfigure}
	\hfill
	\begin{subfigure}[t]{.45\textwidth}
		\centering
		\includegraphics[page=2]{triple_orientations}
		\caption{}
		\label{fig:separation}  
	\end{subfigure}

  \caption{An illustration of \subref{fig:triple_orientations}~triple-orientations and \subref{fig:separation}~an $A$-$B$-separation.}
\end{figure}

It is easy to see, 
that convexity is a combinatorial rather than a geometric property 
since $k$-gons
can be described only by the relative position of the points:
If the points $s_1,\ldots,s_k$ are the vertices of a convex polygon
(ordered along the boundary),
then, for every $i=1,\ldots,k$, 
the cyclic order of the other points around $s_i$ is 
$s_{i+1},s_{i+2},\ldots,s_{i-1}$ (indices modulo~$k$).
Similarly, one can also describe containment (and thus $k$-holes) 
only using relative positions: 
A~point $s_0$ lies inside a convex polygon with vertices $s_1,\ldots,s_k$ (ordered along the boundary) 
if, for every $i=1,\ldots,k$, 
the line $\overline{s_is_{i+1}}$ does not separate $s_0$ 
from the remaining vertices $\{s_1,\ldots,s_k\} \setminus \{s_i,s_{i+1}\}$ (indices modulo~$k$).

To observe that the disjointness of two point sets
can be described solely using triple orientations,
suppose that a line $\ell$ separates point sets $A$ and~$B$.
Then, for example by rotating $\ell$, we can find another 
line $\ell'$ that contains a point $a\in A$ and a point $b\in B$
and separates $A\setminus \{a\}$ and $B\setminus \{b\}$.
In particular,
we have $\chi_{aba'} \le 0$ for all $a' \in A$
and $\chi_{abb'} \ge 0$ for all $b' \in B$, or the other way round.
Figure~\ref{fig:separation} gives an illustration.
Altogether, the existence of disjoint holes can be described solely using triple orientations.
 
Note that, even though
there are uncountable possibilities to choose 
$n$ points from the Euclidean plane for fixed~$n \in \mathbb{N}$,
there are only finitely many equivalence classes of point sets 
when point sets inducing the same orientation triples are considered equal.
As introduced by Goodman and Pollack~\cite{Goodman:1983:doi:10.1137/0212032},
these equivalence classes 
(sometimes also with unlabeled points) 
are called \emph{order types}.

\subsection{An Abstraction of Point Sets}
\label{ssec:abstract_point_sets}

Consider a point set $S=\{s_1,\ldots,s_n\}$ 
where $s_1,\ldots,s_n$ have increasing $x$-coordinates.
Using the \emph{unit paraboloid duality transformation},
which maps a point $s=(a,b)$ to the line $s^*:y=2ax-b$,
we obtain the arrangement of dual lines $S^* = \{s_1^*,\ldots,s_n^*\}$,
where the dual lines $s_1^*,\ldots,s_n^*$ have increasing slopes.
By the increasing $x$-coordinates
and the properties of the unit paraboloid duality 
(cf.\ \cite[Chapter~6.5]{ORourke1994_book} or \cite[Chapter~1.4]{Edelsbrunner1987_book}), 
for every three points $s_i,s_j,s_k$ with $i<j<k$ 
the following three statements are equivalent:
\begin{enumerate}[(i)]
\item 
The points $s_i,s_j,s_k$ are positively oriented.
\item
The point $s_k$ lies above the line $\overline{s_is_j}$.
\item 
The intersection-point of the two lines $s_i^*$ and $s_j^*$ lies above the line $s_k^*$.
\end{enumerate}
Due to Felsner and Weil~\cite{FelsnerWeil2001} 
(see also Balko, Fulek, and Kyn{\v{c}}l~\cite{BalkoFulekKyncl2015}),
for every four points $s_i,s_j,s_k,s_l$ with {$i<j<k<l$},
the sequence 
\[
\chi_{ijk},\ \chi_{ijl},\ \chi_{ikl},\ \chi_{jkl}
\] 
(index-triples are in lexicographic order)
changes its sign at most once. 
These conditions are the \emph{signotope axioms}.
It is worth mentioning that the signotope axioms were also used 
in the computer-assisted proof for $g(6)=17$ by Szekeres and Peters \cite{SzekeresPeters2006}
and also later by Balko and Valtr~\cite{BalkoValtr2017},
who refuted a natural strengthening of the Erd\H{o}s--Szekeres conjecture 
introduced by Szekeres and Peters.

The signotope axioms are necessary conditions 
but not sufficient to axiomize point sets:
As described above, 
every point set $S$ induces a mapping $\chi_S: {n \choose 3} \to \{+,-\}$, 
which fulfills the signotope axioms.
However, there exist mappings $\chi_S: {n \choose 3} \to \{+,-\}$ fulfilling the signotopes axioms
-- such mappings are called
\emph{signotopes},
\emph{abstract point sets},
\emph{abstract order types}, and
\emph{abstract oriented matroids (of rank 3)}
--
which are \textbf{not} induced by any point set,
and in fact, deciding
whether an abstract point set has a realizing point set 
is known to be $\exists \mathbb{R}$-complete.
For more information about realizability we refer to~\cite{FelsnerGoodman2016}.

\subsection{Increasing Coordinates and Cyclic Order}

In the following, we see why
we can assume 
that in every point set $S=\{s_1,\ldots,s_n\}$ with $s_i=(x_i,y_i)$ the following three conditions hold:
\begin{itemize}
\item 
the points $s_1,\ldots,s_n$ have increasing $x$-coordinates (i.e.,~$x_1 \le \ldots \le x_n$)
\item
in particular, $s_1$~is the leftmost point, and
\item 
the points $s_2,\ldots,s_n$ are sorted around $s_1$.
\end{itemize}

When modeling a computer program,
one can use these constraints (which do not affect the output of the program)
to restrict the search space and to possibly get a speedup. 
This idea, however, is not new and was already used 
for the generation of the \emph{order type database}, 
which provides a complete list of all order types of up to $11$ points~\cite{Krasser03,aak-eotsp-01a,ak-aoten-06}.

\begin{lemma}
\label{lemma:increasing_coordinates}
Let $S=\{s_1,\ldots,s_n\}$ 
be a point set where
$s_1$ is extremal and $s_2,\ldots,s_n$ are sorted around $s_1$.
Then there is a point set 
$\tilde{S}=\{\tilde{s_1},\ldots,\tilde{s_n}\}$ of the same order type as~$S$
(in particular, $\tilde{s_2},\ldots,\tilde{s_n}$ are sorted around~$\tilde{s_1}$)
such that the points $\tilde{s_1},\ldots,\tilde{s_n}$ have increasing $x$-coordinates.
\end{lemma}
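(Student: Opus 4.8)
The plan is to produce an explicit realization of the given (labeled) order type by composing a rotation with a single projective transformation, and to let $s_1$ play the role of the apex of a pencil of lines that we linearize. First I would exploit that $s_1$ is extremal: after rotating the whole configuration (a rotation is orientation preserving, hence preserves the order type) I may assume that $s_1=(0,0)$ is the unique leftmost point and that every other point lies strictly to its right, i.e.\ $x_i>0$ for all $i\ge 2$. Since $s_2,\dots,s_n$ are sorted counterclockwise around $s_1$ and all lie in the right half-plane, their angular order around $s_1$ coincides with the order of their slopes $m_i:=y_i/x_i$, so I may assume $m_2<m_3<\dots<m_n$.

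Next I would apply the projective transformation $T\colon(x,y)\mapsto(y/x,\,1/x)$, which in homogeneous coordinates reads $(x:y:1)\mapsto(y:1:x)$. Its matrix has positive determinant, and every image $T(s_i)=(m_i,\,1/x_i)$ has positive last homogeneous coordinate $x_i$; therefore, for any three indices $i,j,k\ge 2$ one has $\chi_{T(s_i)T(s_j)T(s_k)}=\chi_{s_is_js_k}$, so $T$ preserves the orientation of every triple not involving $s_1$. The crucial feature of this particular $T$ is that the images of $s_2,\dots,s_n$ inherit the slope order as an order of their \emph{first} coordinates, so they already have strictly increasing $x$-coordinates $m_2<\dots<m_n$ (and positive $y$-coordinates $1/x_i$).

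The remaining, and main, difficulty is the point $s_1$ itself: being the apex of the pencil of lines through which we linearized the angular order, it is sent by $T$ to the point at infinity in the vertical direction. To keep the realization finite I would replace it by a point $P_1=(c,M)$ with $c<m_2$ and $M$ large. Expanding the orientation determinant of $P_1,\,T(s_i),\,T(s_j)$, its sign is governed for large $M$ by the term $M\,(m_j-m_i)$, so for $M$ large enough $\chi_{P_1\,T(s_i)\,T(s_j)}=\sgn(m_j-m_i)$ for all $i<j$; this matches the value $\chi_{s_1 s_i s_j}=\sgn\!\big(x_ix_j(m_j-m_i)\big)=+1$ computed directly in the rotated copy of $S$. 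Hence $\tilde S:=\{P_1,T(s_2),\dots,T(s_n)\}$ realizes the same labeled order type (and, for $M$ large and $c$ generic, remains in general position), while by construction its $x$-coordinates satisfy $c<m_2<\dots<m_n$, i.e.\ they are increasing. I expect the delicate checks to be (i) verifying that $T$ does not globally flip orientations, which is exactly why I use the representation of $T$ with positive determinant and keep all relevant points on one side of the line $x=0$ that $T$ sends to infinity, and (ii) justifying that the finitely many orientation determinants involving $P_1$ truly stabilize, which only requires $M$ to exceed a bound depending on the finite data $\{(m_i,1/x_i)\}$. A minor convention issue is that I read ``sorted around $s_1$'' as counterclockwise, which is precisely what makes an increasing-$x$ order compatible with $s_1$ being leftmost; the clockwise case is entirely symmetric.
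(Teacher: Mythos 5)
Your proof is correct and follows essentially the same route as the paper: normalize so that $s_1$ is at the origin and the remaining points lie in an open half-plane, then apply a projective map that converts the radial order around $s_1$ into the order of $x$-coordinates, using positivity of the relevant coordinates (equivalently, multilinearity of the orientation determinant) to see that all triple orientations are preserved. The only difference is in handling the apex: the paper first perturbs $s_1$ to $(0,\varepsilon)$ so that the same map $(x,y)\mapsto(\nicefrac{x}{y},\nicefrac{-1}{y})$ applies uniformly to all $n$ points, whereas you send $s_1$ to infinity and pull it back to a finite point $(c,M)$ with $M$ large, verifying the finitely many orientations involving it by a limiting argument --- both resolutions are valid.
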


\begin{proof}
We can apply an appropriate affine-linear transformation onto~$S$ 
so that $s_1=(0,0)$ and $x_i,y_i > 0$ holds for $i \ge 2$:
First we apply a translation so that $s_1$ lies in the origin,
then we rotate such that all points $s_2,\ldots,s_n$ have positive $x$-coordinate,
and finally we apply a shearing transformation so that $s_2,\ldots,s_n$ have positive $y$-coordinate as well.

We have that
$x_i/y_i$ is increasing for $i \ge 2$ since  $s_2,\ldots,s_n$ are sorted around $s_1$.
Since $S$ is in general position, there is an $\varepsilon > 0$ such that
$S$ and $S' := \{(0,\varepsilon)\} \cup \{s_2,\ldots,s_n\}$ are of the same order type.  
We apply the projective transformation
\mbox{$(x,y) \mapsto (\nicefrac{x}{y},\nicefrac{-1}{y})$}
to $S'$ to obtain~$\tilde{S}$. 
By the multilinearity of the determinant, we obtain
\[
\det 
\begin{pmatrix}
1 & 1 & 1\\
x_i & x_j & x_k\\
y_i & y_j & y_k\\
\end{pmatrix}
=
y_i \cdot y_j \cdot y_k \cdot 
\det 
\begin{pmatrix}
1 & 1 & 1\\
 \nicefrac{x_i}{y_i} &  \nicefrac{x_j}{y_j} &  \nicefrac{x_k}{y_k}\\
\nicefrac{-1}{y_i} & \nicefrac{-1}{y_j} & \nicefrac{-1}{y_k}\\
\end{pmatrix}.
\]
Since the points in $S'$ have positive $y$-coordinates,
$S'$ and $\tilde{S}$ have the same triple orientations.
Moreover, as $\tilde{x_i}=\nicefrac{x_i'}{y_i'}$ is increasing for $i \ge 1$, 
the set $\tilde{S}$ fulfills all desired properties.
\end{proof}

It is worth to mention that the transformation
$(x,y) \mapsto (\nicefrac{x}{y},\nicefrac{-1}{y})$
is the concatenation of the 
(inverse of the) 
\emph{unit paraboloid duality transformation}
and
\emph{unit circle duality transformation}
which -- under the given conditions -- 
both preserve the triple orientations
(see e.g.~\cite[Chapters~1.3 and~2.2]{Krasser03}).

\section{SAT Model}
\label{sec:satmodel}

In this section we describe the SAT model
that we use to prove Theorem~\ref{thm:h55}.
The basic idea of the proof is to assume towards a contradiction
that a point set $S=\{s_1,\ldots,s_{17}\}$ with no two disjoint 5-holes exists.
We formulate a SAT instance, where Boolean variables indicate whether 
triples are positively or negatively oriented
and clauses encode the necessary conditions introduced in Section~\ref{sec:prelim}.
Using a SAT solver we verify that the SAT instance has no solution
and conclude that the point set $S$ does not exist.
This contradiction then completes the proof of Theorem~\ref{thm:h55}.

\medskip
It is folklore that satisfiability is \NP-hard in general,
thus it is challenging for SAT solvers to 
terminate in reasonable time for certain SAT instances.
We now highlight the
two crucial parts of our SAT model, 
which are indeed necessary for reasonable computation times:
First, due to Lemma~\ref{lemma:increasing_coordinates},
we can assume that
the points are sorted from left to right and also around the first point~$s_1$.
Second, we teach the solver that every set of 10 consecutive points 
gives a 5-hole, that is, $h(5)=10$~\cite{Harborth78}.
By dropping either of these two constraints 
(which only give additional information to the solver and do not affect the solution space),
none of the tested SAT solvers terminated within days.

\medskip

In the following, we give a detailed description of our SAT model.
For the sake of readability, we refer to points also by their indices.
Moreover, we use the relation ``$a<b$'' simultaneously 
to indicate a larger index, a larger $x$-coordinate, 
and the later occurence in the cyclic order around~$s_1$.

\subsection{A Detailed Description}

\paragraph{(1) Alternating axioms}

For every triple $(a,b,c)$, we introduce the variable $O_{a,b,c}$ to indicate
whether the triple $(a,b,c)$ is positively oriented.
Since we have that
\[
\chi_{a,b,c} = \chi_{b,c,a} = \chi_{c,a,b} = -\chi_{b,a,c} = -\chi_{a,c,b} = -\chi_{c,b,a}, 
\]
we formulate clauses to assert
\[
O_{a,b,c} =
O_{b,c,a} = 
O_{c,a,b} 
\neq
O_{b,a,c} =
O_{a,c,b} =
O_{c,b,a}
\]
by using the fact
$
A = B \Longleftrightarrow ( \neg A \vee B) \wedge (  A \vee \neg B) ,
$
and
$
A \neq B \Longleftrightarrow ( A \vee B) \wedge (  \neg A \vee \neg B).
$

\paragraph{(2) Signotope Axioms}

As described in Section~\ref{ssec:abstract_point_sets},
for every 4-tuple $a<b<c<d$,
the sequence 
\[ 
\chi_{abc},\ \chi_{abd},\ \chi_{acd},\ \chi_{bcd}
\]
changes its sign at most once.
Formally,
to forbid other sign-patterns (that is, ``$-+-$'' and ``$+-+$''),
we add the constraints
\begin{alignat*}{15}
     & O_{a,b,c} &\ \vee\ & 
\neg & O_{a,b,d} &\ \vee\ & 
     & O_{a,c,d} 
\quad
\quad
\quad
\quad
\neg & O_{a,b,c} &\ \vee\ & 
     & O_{a,b,d} &\ \vee\ & 
\neg & O_{a,c,d} 
\\
     & O_{a,b,c} &\ \vee\ & 
\neg & O_{a,b,d} &\ \vee\ & 
     & O_{b,c,d} 
\quad
\quad
\quad
\quad
\neg & O_{a,b,c} &\ \vee\ & 
     & O_{a,b,d} &\ \vee\ & 
\neg & O_{b,c,d} 
\\
     & O_{a,b,c} &\ \vee\ & 
\neg & O_{a,c,d} &\ \vee\ & 
     & O_{b,c,d} 
\quad
\quad
\quad
\quad
\neg & O_{a,b,c} &\ \vee\ & 
     & O_{a,c,d} &\ \vee\ & 
\neg & O_{b,c,d} 
\\
     & O_{a,b,d} &\ \vee\ & 
\neg & O_{a,c,d} &\ \vee\ & 
     & O_{b,c,d} 
\quad
\quad
\quad
\quad
\neg & O_{a,b,d} &\ \vee\ & 
     & O_{a,c,d} &\ \vee\ & 
\neg & O_{b,c,d} 
\\
\end{alignat*}

\paragraph{(3) Sorted around first point}
 
Since the points $s_1,\ldots,s_n$ are sorted from left to right
and also around the first point~$s_1$,
we have that all triples $(1,a,b)$ are positively oriented for indices $1 < a < b$.

\paragraph{(4) Bounding segments}

For a 4-tuple $a,b,c,d$,
we introduce the auxiliary variable
$E_{a,b;c,d}$ to indicate whether
the segment $ab$ spanned by $a$ and $b$ 
bounds the convex hull of $\{a,b,c,d\}$.
Since the segment $ab$ bounds $\conv(\{a,b,c,d\})$
if and only if $c$ and $d$ lie on the same side of the line~$\overline{ab}$,
we add the constraints
\begin{alignat*}{6}
\neg E_{a,b;c,d} &\ \vee\ &      &O_{a,b,c} &\ \vee\ & \neg &O_{a,b,d}, \\
\neg E_{a,b;c,d} &\ \vee\ & \neg &O_{a,b,c} &\ \vee\ &      &O_{a,b,d}, \\
     E_{a,b;c,d} &\ \vee\ &      &O_{a,b,c} &\ \vee\ &      &O_{a,b,d}, \\
     E_{a,b;c,d} &\ \vee\ & \neg &O_{a,b,c} &\ \vee\ & \neg &O_{a,b,d} .
\end{alignat*}

\paragraph{(5) 4-Gons and containments}

For every 4-tuple $a<b<c<d$, we introduce
the auxiliary variable $G^4_{a,b,c,d}$
to indicate whether the points $\{a,b,c,d\}$ form a 4-gon.
Moreover we introduce the auxiliary variable $I_{i;a,b,c}$ for every 4-tuple $a,b,c,i$ with $a<b<c$ and $a<i<c$
to indicate whether the point $i$ lies inside the triangular convex hull of $\{a,b,c\}$.

Four points $a<b<c<d$, sorted from left to right,
form a 4-gon if and only if 
both segments $ab$ and $cd$ bound $\conv(\{a,b,c,d\})$.
Moreover, if $\{a,b,c,d\}$ does not form a \mbox{4-gon}, 
then either $b$ lie inside the triangular convex hull $\conv(\{a,c,d\})$ 
or $c$ lies inside $\conv(\{a,b,d\})$.
Pause to note that $a$ and $d$ are the left- and rightmost points, respectively, 
and that not both points $b$ and $c$ can lie in the interior of $\conv(\{a,b,c,d\})$.
Formally, we assert 
\begin{alignat*}{4}
G^4_{a,b,c,d} &\ =\ &      E_{a,b;c,d} &\ \wedge\ &      E_{c,d;a,b},\\
  I_{b;a,c,d} &\ =\ & \neg E_{a,b;c,d} &\ \wedge\ &      E_{c,d;a,b},\\
  I_{c;a,b,d} &\ =\ &      E_{a,b;c,d} &\ \wedge\ & \neg E_{c,d;a,b}.
\end{alignat*}

\paragraph{(6) 3-Holes}

For every triple of points $a<b<c$, we introduce the auxiliary variable
$H^3_{a,b,c}$ to indicate whether the points $\{a,b,c\}$ form a 3-hole.
Since three points $a<b<c$ form a 3-hole if and only if 
every other point $i$ with $a<i<c$ lies outside the triangular convex hull $\conv(\{a,b,c\})$,
we add the constraint
\[
H^3_{a,b,c} = \bigwedge_{i \in S \setminus \{a,b,c\}} \neg I_{i;a,b,c}.
\]

\paragraph{(7) 5-Holes}

For every 5-tuple $X=\{a,b,c,d,e\}$ with $a<b<c<d<e$, 
we introduce the auxiliary variable $H^5_X$ to indicate
that the points from $X$ form a 5-hole.
It is easy to see that the points from $X$ form a 5-hole if and only if
every 4-tuple $Y \in {X \choose 4}$ forms a 4-gon
and if every triple $Y \in {X \choose 3}$ forms a 3-hole.
Therefore, we add the constraint
\[
H^5_{X} = 
\biggl(
\ 
\bigwedge_{Y \in {X \choose 4}}  G^4_{Y} 
\ 
\biggr)
\wedge 
\biggl(
\ 
\bigwedge_{Y \in {X \choose 3}}  H^3_{Y} 
\ 
\biggr).
\]

\paragraph{(8) Forbid disjoint 5-holes}

If there were two disjoint 5-holes $X_1$ and~$X_2$ in our point set~$S$,
then -- as discussed in Section~\ref{sec:prelim} --
we could find two points $a \in X_1$ and $b \in X_2$
such that the line~$\overline{ab}$ separates 
$X_1 \setminus \{a\}$ and $X_2 \setminus \{b\}$ (cf.~Figure~\ref{fig:separation}) -- 
and this is what we have to forbid in our SAT model.
Hence, for every pair of two points $a,b$ we introduce the variables
\begin{itemize}
 \item 
 $L_{a,b}$ to indicate that
 there exists a 5-hole $X$ containing the point $a$
 that lies to the left of the directed line~$\overrightarrow{ab}$,
 that is, the triple $(a,b,x)$ is positively oriented for every $x \in X \setminus \{a\}$, and
 \item 
 $R_{a,b}$ to indicate that
 there exists a 5-hole $X$ containing the point $b$ 
 that lies to the right of the directed line~$\overrightarrow{ab}$,
 that is, the triple $(a,b,x)$ is negatively oriented for every $x \in X \setminus \{b\}$.
\end{itemize}
For every 5-tuple $X$ with $a \in X$ and $b \not\in X$ we assert
\[
L_{a,b} 
\ 
\vee 
\  
\neg H^5_X 
\ 
\vee
\ 
\biggl(
\ 
\bigvee_{c \in X \setminus \{a\}} \neg O_{a,b,c} 
\ 
\biggr),
\]
and for every 5-tuple $X$ with $a \not\in X$ and $b \in X$ we assert
\[
R_{a,b} 
\ 
\vee 
\  
\neg H^5_X 
\ 
\vee
\ 
\biggl(
\ 
\bigvee_{c \in X \setminus \{b\}} O_{a,b,c} 
\ 
\biggr).
\]
Now we forbid that there are 5-holes on both sides of the line $\overline{ab}$ by asserting
\[
 \neg L_{a,b} \vee \neg R_{a,b}.
\]

\paragraph{(9) Harborth's result}

Harborth~\cite{Harborth78} has shown that every set of 10 points 
gives a 5-hole\footnote{%
$h(5)=10$ can also be verified by slightly adapting the described SAT model},
that is, $h(5)=10$.
Since Harborth's result applies to each set of 10 consecutive points of $S$, 
we can teach the SAT solver that
\begin{itemize}
 \item for every $i=1,\ldots,8$, there is a 5-hole $X$ with $X \subset \{ i,\ldots, i+9\}$. 
\end{itemize}
Moreover, if there is a 5-hole $X_1$ in the set $\{1,\ldots,7\}$, 
then there is another 5-hole $X_2$ in the set $\{8,\ldots,17\}$.
Analogously, if there is a 5-hole $X_1$ in the set $\{11,\ldots,17\}$, 
then there is another 5-hole $X_2$ in the set $\{1,\ldots,10\}$.
Therefore, we can teach the SAT solver that
\begin{itemize}
 \item there is no 5-hole $X$ with $X \subset \{ 1,\ldots, 7\}$, and 
 \item there is no 5-hole $X$ with $X \subset \{11,\ldots,17\}$.
\end{itemize}

\medskip

We remark that the 
obtained SAT instance has 825\,689 constraints in 23\,392 variables,
and that the dominating parts is~(8).
The source code of our python program
which creates the instance is available as supplemental data and
on our website~\cite{website_disjoint_holes}.

\subsection{Unsatisfiability and Verification}
\label{ssec:satmodel_unsat}

Having the satisfiability instance generated, we used the following command 
to create an unsatisfiability certificate:
\begin{verbatim}
  glucose instance.cnf -certified -certified-output=proof.out
\end{verbatim}
The certificate cerated by glucose
was then verified using the proof checking tool
drat-trim by the following command:
\begin{verbatim}
  drat-trim instance.cnf proof.out
\end{verbatim}
The execution of each of the two commands (glucose and drat-trim), 
took about 2 hours and 
the certificate used about 3.1~GB of disk space.

We have also used picosat to prove unsatisfiability:
\begin{verbatim}
  picosat instance.cnf -R proof.out
\end{verbatim}
This command ran for about 6 hours
and created a certificate of size about 2.1~GB.
The verification of the certificate\footnote{%
In our experiments, picosat wrote a comment ``\%RUPD32 ...'' as first line in the RUP file. 
This line had to be removed manually to make the file  parsable for drat-trim.
}  using drat-trim took about 9~hours.

\section{Final Remarks}
\label{sec:final_remarks}

In (8), we have introduced the variable
$L_{a,b}$ to indicate that
there exists a 5-hole $X$ containing the point $a$
that lies to the left of the directed line~$\overrightarrow{ab}$.
By relaxing this to 
``\ldots there exists a 5-hole $X$, possibly containing the point $a$, \ldots''
and analogously for $R_{a,b}$, 
the computation time reduces by factor of roughly~2 
while the number of clauses raises by a factor of~$n$.
The solution space, however, remains unaffected.

\medskip
As pointed out by the anonymous reviewers,
the constraints
$
H^5_{X} = 
(
\bigwedge_{Y \in {X \choose 4}}  G^4_{Y} 
)
\wedge 
(
\bigwedge_{Y \in {X \choose 3}}  H^3_{Y} 
)
$
from (7) are equivalent to
$
H^5_{X} = 
\bigwedge_{Y \in {X \choose 3}}  H^3_{Y} 
$.
Replacing (7) by this simplified expression
 further makes the auxiliary variables $G^4_{a,b,c,d}$ and the first part of (5) obsolete.
However, since the described replacement 
did not show any effect on the running time of the solvers
and since we discuss $g(6)=17$ and the {Classical Erd\H{o}s--Szekeres} problem below,
we decided to keep (7), (5), and the auxiliary variables $G^4_{a,b,c,d}$.

\medskip

\paragraph{Multi-parametric Values:}
To determine multi-parametric values such as $h(5,5,5)$,
one can formulate a SAT instance as follows:
Three 5-holes $X_1,X_2,X_3$ 
are pairwise disjoint if 
there is a line $\ell_{ij}$ 
for every pair $X_i,X_j$ 
that separates $X_i$ and~$X_j$. 
By introducing auxiliary variables $Y_{i,j}$ 
for every pair of 5-tuples $X_i,Y_i$ 
to indicate whether $X_i$ and $X_j$ are disjoint 5-holes,
one can formulate an instance in $\Theta(n^{10})$ variables with $\Theta(n^{15})$ constraints.
However, since this formulation is quite space consuming, 
a more compact formulation might be of interest.

\paragraph{Interior-disjoint Holes:}
Besides disjoint holes, also the variant of interior-disjoint holes 
has been investigated intensively by various groups of researchers
(see e.g.~\cite{DevillersHKS2003,SakaiUrrutia2007,CanoGHSTU2015,BiniazMaheshwariSmid2017,HosonoUrabe2018}).
Two holes $X_1,X_2$ are called \emph{interior-disjoint} 
if their respective convex hulls are interior-disjoint.
Figure~\ref{fig:disjoint_holes} gives an illustration.
Interior-disjoint holes are also called \emph{compatible} in literature.
Note that a pair of interior-disjoint holes can share up to two vertices.

Interior-disjoint holes also play an important role in the study of other geometric objects
such as visibility graphs (see e.g.~\cite{DumitrescuTothPach2009}) or
flip graphs of triangulations on point sets (see e.g.~\cite{Pilz2018}).
Wagner and Welzl \cite{WagnerWelzl2019} quite recently
developed a framework for triangulations on planar point sets
which also allows the investigation of interior-disjoint holes.
Using their tools and results it is, for example, quite easy to derive that 
10 points always give a 4-hole and a 5-hole that are interior-disjoint.
In fact, when comparing the number of researchers working on the respective problems,
the interior-disjoint holes appear to be more of interest.

In a recent article, 
Hosono and Urabe~\cite{HosonoUrabe2018} 
summarized the current status and presented some new results.
They show that every set of 18 points contains two interior-disjoint 5-holes
and present a set of 13 points which does not contain interior-disjoint 5-holes.
By slightly adapting the SAT model from Section~\ref{sec:satmodel},
we managed to show that every set of 15 points contains two interior-disjoint 5-holes.
This bound is best possible because
the set of 14 points depicted in Figure~\ref{fig:n14} does not contain interior-disjoint 5-holes.
Also this set was found via our framework pyotlib (cf.\ Section~\ref{sec:results_three}).

\begin{figure}[htb]
\centering

\hfill
\begin{minipage}{0.55\textwidth}
\includegraphics[width=\textwidth]{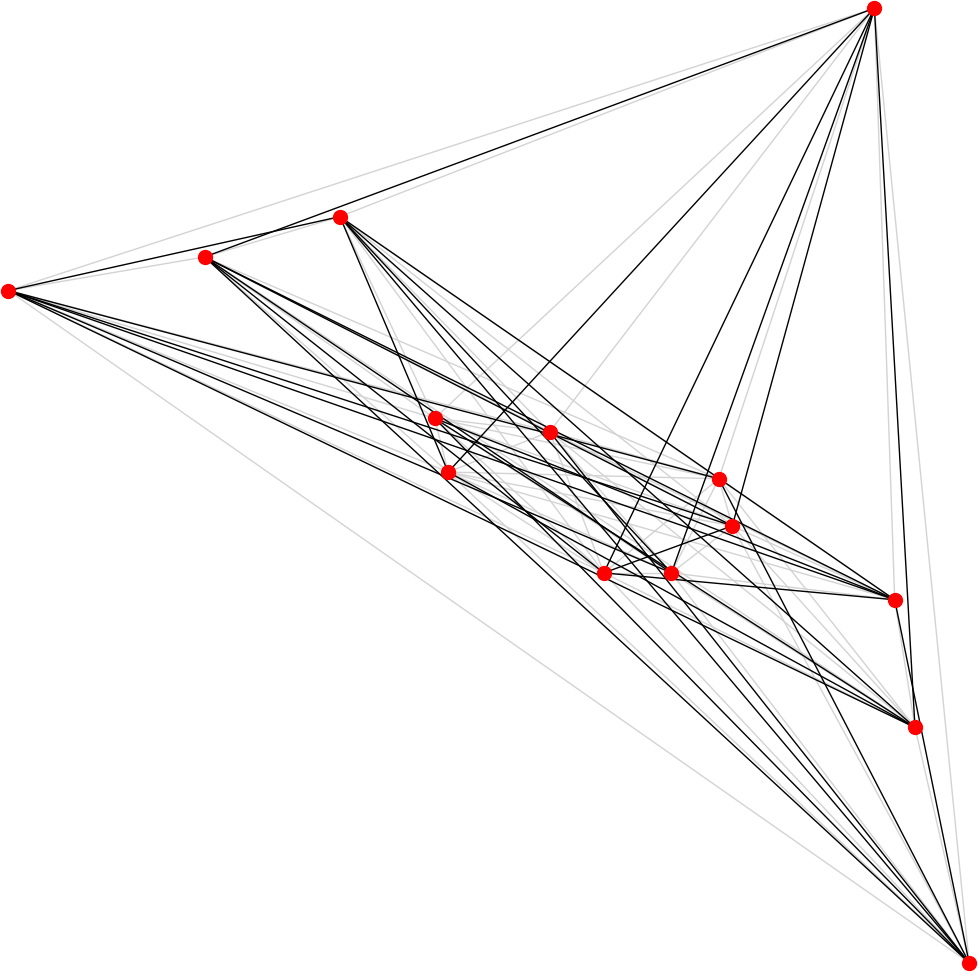}
\end{minipage}
\hspace{0.5cm}
\begin{minipage}{0.22\textwidth}
{
\small
\begin{verbatim}
142   0
  0 100
 29 105
 65  73
 63  81
 49 111
 88  58
 80  79
 98  58
107  65
105  72
134  35
131  54
128 142
\end{verbatim}
}
\end{minipage}
\hfill

\caption{A set of 14 points with no two interior-disjoint $5$-holes.
The coordinates are given on the right side.}
\label{fig:n14}
\end{figure}

Table~\ref{tab:2_interior_disjoin_holes} summarizes
the best possible bounds for two interior-disjoint holes \cite{HosonoUrabe2018}.
Again it is worth mentioning that all entries of the table (except for two interior-disjoint 5-holes)
can be verified using the order type database  \cite{AichholzerOTDB}
since at most 10 points are involved in the respective arguments.
We remark that, analogously to Section~\ref{sec:results_three}, 
one could further improve the bounds for three interior-disjoint holes.

\begin{table}[htb]
\def\arraystretch{1.2}
	\centering
	\begin{tabular}{r|lll}
			&3	&4	&5\\
	\hline
	3		&4	&5	&10\\
	4		&	&7	&10\\	
	5		&	&	&15*\\
	\end{tabular}
	\caption{Best possible bounds on the minimum number of points 
	such that every set of that many points contains two interior-disjoint holes of sizes $k_1$ and~$k_2$. 
	The entry marked with star (*) is new.}
	\label{tab:2_interior_disjoin_holes}
\end{table}

To be more specific on the changes of the SAT model for this variant: 
we slighly relaxed the contraints (8)
so that each of the two points $a$ and $b$, which span a separating line~$\ell$, 
can be contained in holes from both sides.
The program creating the SAT instance is also available 
as supplemental data and
on our website~\cite{website_disjoint_holes}.

\paragraph{Classical Erd\H{o}s--Szekeres:}
The computation time for the 
computer assisted proof by Szekeres and Peters~\cite{SzekeresPeters2006} for $g(6)=17$ 
was about 1500 CPU hours.
By slightly adapting the model from Section~\ref{sec:satmodel}
we have been able to confirm $g(6)=17$ 
using glucose and drat-trim
with about one hour of computation time on a 3GHz CPU.
To be more specific with the adaption of the model from Section~\ref{sec:satmodel}:
\begin{itemize}

 \item The constraints (6) are removed.
 
 \item The constraints (7) are adapted to ``(7*) 6-Gons''
 simply by testing 6-tuples instead of 5-tuples and 
 by dropping the requirement that ``triples form 3-holes''.
 
 \item The contraints (8) are removed.
\end{itemize}
Also this program is available as supplemental data and
on our website~\cite{website_disjoint_holes}.

An independent verification of $g(6)=17$ 
has been done by Mari\'c \cite{Maric2019},
who used a similar SAT framework.
While his program performs
slightly more efficient on verifying $g(6)=17$,
our model is more compact in size.
In fact, our program can be used to create an instance 
for testing $g(7)\stackrel{?}{=}33$ which only requires only 1.1~GB of disk space
(which was not possible in \cite{Maric2019}),
however, for determining whether $g(7)=33$
further ideas or more advanced SAT solvers seem to be required.
To be more precise about the differences in the two settings:
While Mari\'c assumed that points are indexed 
with respect to their convex hull peeling depth, 
we assumed that points sorted from left-to-right 
to break the symmetries in the search-space of the SAT solver.
Hence, in our model we could axiomize point sets via signotope axioms on 4-tuples
which saves a linear factor in the size of the instance 
compared to the axiomatization via 5-tuples such as used in \cite{Maric2019}
(cf.\ Chapters~2.1, 2.2, and~4.3 in   \cite{Scheucher2020Diss}).
Unfortunately, we do not see how 
Mari\'c's assumptions on the convex hull layers
can be combined with signotope axioms.

\paragraph{Counting 5-Holes:}

It is also possible to count occurences of certain substructures using SAT solvers.
For example to find point sets with as few 5-holes as possible,
we have introduced variables $X_{abcde;k}$ 
indicating whether the indices $1 \le a<b<c<d<e \le n$ 
form the $k$-th 5-hole in lexicographic order.
In particular, using SAT solvers we have been able to show that 
every set of 16 points contains at least 11 5-holes 
(cf.\ \cite{5holes_Socg2017,5holes_arXiv_version}).

\section*{Acknowledgements}

The author was supported by the DFG Grant FE~340/12-1 and 
by the internal research funding ``Post-Doc-Funding'' from Technische Universit\"at Berlin.
We thank Stefan Felsner, Linda Kleist, Felix Schr\"oder, Martin Balko, Adrian Dumitrescu, 
and Emo Welzl
for fruitful discussions and helpful comments.
Many thanks goes to Gyula K{\'a}rolyi for communicating his construction 
for $F_5(n) \leq \frac{n+1}{6}$, 
which in fact also shows $F_6(n) \leq \frac{n+1}{12}$ (cf.\ Section~\ref{sec:results_many}).
Last but not least, we would also like to thank 
the anonymous reviewers for their valuable comments
which further improved the quality of this article.

{
\small
\bibliographystyle{alphaabbrv-url}
\bibliography{bibliography}
}

\end{document}